\documentclass[10pt]{article}
\usepackage{amsmath, amsthm}
\usepackage{amssymb}
\usepackage[hypertex]{hyperref}

\usepackage{graphicx}

\newcommand{\hookuparrow}{\mathrel{\rotatebox[origin=c]{90}{$\hookrightarrow$}}}

\setlength{\textwidth}{14cm}
\setlength{\textheight}{22cm}
\setlength{\oddsidemargin}{1cm}
\setlength{\evensidemargin}{1cm}
\setlength{\topmargin}{0cm}


\newtheorem{theorem}{Theorem}
\newtheorem{proposition}[theorem]{Proposition}
\newtheorem{lemma}[theorem]{Lemma}
\newtheorem{corollary}[theorem]{Corollary}

\newtheorem{definition}[theorem]{Definition}
\newtheorem{definition and remark}[theorem]{Definition and Remark}
\newtheorem{definition and proposition}[theorem]{Definition and Proposition} 
\newtheorem{remark and definition}[theorem]{Remark and Definition}
\newtheorem{example}[theorem]{Example}
\newtheorem{remark}[theorem]{Remark}

\newtheorem{notation and remark}[theorem]{Notation and Remark}

\newtheorem{fact}[theorem]{FACTS}


\newcommand {\PP}{\mathbb{P}}
\newcommand {\QQ}{\mathbb{Q}}

\newcommand{\End}{{\rm End}}

\newcommand{\Ann}{{\rm Ann}}

\renewcommand{\max}{{\rm Max}}

\newcommand{\ga}{\mathfrak{a}} 

\newcommand{\gm}{\mathfrak{m}}

\newcommand{\gp}{\mathfrak{p}}

\title{The quadratic complete intersections associated with the action of the symmetric group}
\author{T.\  Harima, Niigata University \\ Department of Mathematics Education, Niigata, 950-2181 Japan
\thanks{Supported by Grant-in-Aid for Scientific Research (C) (23540052).} 
\and A.\ Wachi, Hokkaido University of Education \\  Department of Mathematics,  
\\ Kushiro, 085-8580 Japan
\thanks{Supported by Grant-in-Aid for Scientific Research (C) (23540179).} 
\and  J.\ Watanabe, Tokai University \\ Department of Mathematics, Hiratsuka, 259-1292 Japan
\thanks{Supported by Grant-in-Aid for Scientific Research (C) (23540050).}}
\markright{Lefschetz Properties}

\begin{document}

\maketitle
\date{}

\def\pa{{\partial}}
\begin{abstract} We prove that any quadratic complete intersection with a  certain action of  the symmetric group has the 
strong Lefschetz property over a field of characteristic zero. Furthermore we discuss under what condisions 
its ring of invariatns by a Young subgroup is a homogeneous complete intersection with a standard grading.     
\end{abstract}

\section{Introduction}
It seems natural to conjecture that all (Artinian) complete intersections with standard grading have the strong Lefschetz property 
over a field of characteristic zero. If there is a group action on a complete intersection, it sometimes 
enables us to prove that the ring  has the  property (see \cite{HMMNWW}\;Chapter~4). 
For example,  consider the monomial complete intersection:
\[A=K[x_1,x_2, \cdots, x_r]/(x_1^{n_1+1}, x_2^{n_2+1}, \cdots, x_r^{n_r+1}).\]
In spite of the simple nature of the assertion of the strong Lefschetz property, 
the proof it has the strong Lefschetz property is complicated.   
If $n_1=n_2=\cdots = n_r=1$, however,  Ikeda's Lemma provides an easy 
proof (\cite{ikeda}\; Lemma~1.1, \cite{HMMNWW}\; Proposition~3.67, Corollary~3.70).   
It seems remarkable that any monomial complete intersection appears as a subring of the quadratic monomial 
complete intersection.  In fact  the algebra $A$ above 
is the invariant subring of
the quadratic complete intersection  
$K[x_1,x_2, \cdots, x_n]/(x_1^2, x_2^2, \cdots, x_n^2)$ 
under the group action of the Young subgroup 
\[S_{n_1}\times S_{n_2} \times \cdots \times S_{n_r}  \subset S_n\]
where \[n=n_1 + n_2 + \cdots + n_r.\] 
Once we know the strong Lefschetz property in the quadratic case, the general case then follows almost 
immediately. 

The purpose of this paper is to  generalize this argument.  
First we construct a flat family of quadratic complete intersections,  with four parameters, 
on which the Young subgroup acts in the same way as it does on the quadratic  
monomial complete intersection. 
It will be proved that any member in this family  has the strong Lefscehtz property. 
It is crucial to assume that the generators of the defining ideal are quadrics and the symmetric group acts on it. 
Then we prove that the ring of invariants of any complete intersection  in this family 
by the action of any Young subgroup in $S_n$ is again  a complete intersection 
with  the strong Lefschetz property. 
Proof is easy, but the invariant subrings basically do not always have a standard grading.  
Rather surprisingly, however, it turns out that most of them have the standard grading 
thanks to the assumption that the generators are quadrics.  
  
The main results of this paper are Theorem~\ref{main_thm1} and Theorem~\ref{main_thm2} which are 
stated in \S4 and \S5 respectively.   
A theorem of Goto says that the ring of invariants of a complete intersection is again a complete intersection if the 
group is generated by pseudo-reflections and its order is invertible in the ground field. 
We need to construct a set of uniform generators for all
invariant subrings in the family. This is treated in the Appendix. 

The authors would like to thank Larry Smith very much for helpful comments to improve this paper. 
\section{Definitions}

\begin{definition}
{\rm 
Let $V = \bigoplus _{i=0} ^{\infty} V_i$ be a finite dimensional graded vector space and 
let $L \in \End _{{\bf gr}} (V)$ be a graded endomorphism 
\[L : V  \to V\]
of degree one. Namely a graded endomorphism of degree one is a collection of 
homomorphisms  $\{L_i:V_i \to V_{i+1}\}$. 
We call $L$ a {\bf weak Lefschetz element} if the map $L$ has  piece-wise full rank, i.e, the restricted map 
$L_i:V_i \to V_{i+1}$ is either injective or surjective for all $i = 0,1,2,\cdots$. 
We will write $L_i: V_i \to V_{i+1}$ simply as $L: V_i \to V_{i+1}$. 
We say that $L$ is a {\bf strong Lefschetz element} if there exists an integer $c$ such that 
$V_i=0$ for all $i \geq c+1$ and  the map $L^{c-2i}$ restricted to the homogeneous part 
$L^{c -2i} :V_i \to V_{c-i} $ is bijective for all $i=0,1,2, \cdots , [c/2].$   
The map $i \mapsto \dim _K V_i$ is called the Hilbert function of $V$.  Sometimes it is written as the 
power series $\sum _{i=0} ^{\infty} (\dim _K V_i)T^i$.  Since $V$ is a finite dimensional vector space, 
the Hilbert series of $V$ is actually  a polynomial in $T$.   
If a graded homomorphism of $L \in \End _{{\bf gr}}(V)$ is a strong 
Lefschetz element, it automatically implies that the  Hilbert 
function of $V$ is symmetric about the half integer $c/2$, where  $c=a+b$, $a$ is the initial and 
$b$ the end degrees of $V=\bigoplus _i V_i$. 
} 
\end{definition}

\begin{lemma}   \label{wl_implies_sl}
Suppose that $V=\bigoplus _i V_i$ is a finite dimensional graded vector space and that 
$V$ has a constant Hilbert function.  
Suppose that $L \in \End _{{\bf gr}}(V)$ is a graded endomorphism of degree one.  
If $L$ is a weak Lefschetz element, then $L$ is a strong Lefschetz element.  
\end{lemma}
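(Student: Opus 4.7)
The plan is to observe that a constant Hilbert function collapses the dichotomy in the weak Lefschetz condition: on a finite-dimensional graded vector space of constant Hilbert function, each graded piece $L_i : V_i \to V_{i+1}$ goes between spaces of the same dimension, so ``injective or surjective'' is equivalent to ``bijective.'' Thus the weak Lefschetz hypothesis upgrades automatically, piece by piece, to the statement that every $L_i$ is an isomorphism on the support of $V$.

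Concretely, I would first let $a$ be the initial and $b$ the end degree of $V$, and set $c = a+b$. By hypothesis, $\dim_K V_i = d$ for all $a \le i \le b$ and $V_i = 0$ otherwise. For each $a \le i \le b-1$, the map $L : V_i \to V_{i+1}$ is between vector spaces of equal dimension $d$, so being injective or surjective is the same as being bijective; weak Lefschetz thus makes every such $L_i$ bijective. Consequently, for any $a \le i \le c-i \le b$, the composition
\[
L^{c-2i} : V_i \xrightarrow{L} V_{i+1} \xrightarrow{L} \cdots \xrightarrow{L} V_{c-i}
\]
is a composition of bijections, hence bijective.

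It remains to handle the indices $0 \le i < a$ allowed by the definition of the strong Lefschetz property. For such $i$, the source $V_i$ is zero. For the target $V_{c-i} = V_{a+b-i}$, the inequality $i < a$ forces $a+b-i > b$, so $V_{c-i} = 0$ as well; the map $L^{c-2i} : V_i \to V_{c-i}$ is then trivially bijective. Combining the two ranges of $i$ covers all $0 \le i \le [c/2]$, and $V_i = 0$ for $i \ge c+1$ since $c+1 > b$; this verifies the definition of strong Lefschetz element.

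I do not anticipate any significant obstacle here: the argument is essentially the observation ``equal-dimensional injection is a bijection.'' The only mildly subtle point is bookkeeping the boundary indices $i < a$ and confirming that both the source and target vanish there, which is immediate from $c = a+b$.
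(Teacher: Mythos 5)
Your proposal is correct and follows the same route as the paper's proof, which simply sets $c=a+b$ and notes that $L^{c-2i}:V_i\to V_{c-i}$ is ``obviously'' a bijection; you have merely spelled out the details (equal-dimensional injection is a bijection, plus the vacuous boundary cases $i<a$) that the paper leaves implicit.
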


\begin{proof} \label{weak_is_strong} 
Let $a$ be the initial and $b$ the end degrees of $V$, and let $c=a+b$. 
Then obviously the map 
$L^{c-2i}:V_i \to V_{c-i}$ is a bijection for any $i \leq [c/2]$.  
\end{proof}

\begin{definition}
{\rm  
Let $A= \bigoplus _{i=0} ^c A _i$ be a graded (not necessarily standard graded) Artinian $K$-algebra over 
a field with $A_0=K$.   
We say that $A$ has the {\bf weak} (resp. {\bf strong}) {\bf Lefschetz property},  if there exists 
a linear form  $l \in A_1$ such that the multiplication map 
$L=\times l \in \End _{{\bf gr}} (A)$ is a weak (resp. strong)  Lefschetz element.  
Such a linear form  $l$ is called a {\bf weak} (resp. {\bf strong}) {\bf Lefschetz element}. 
Sometimes we use the abbreviation: WLP (resp. SLP) for weak (resp. strong) Lefschetz property.
}
\end{definition} 

\begin{definition}  
{\rm  
Let $A= \bigoplus _{i=0} ^c A _i$ be a graded Artinian $K$-algebra over a field with $A_0=K$.  
The {\bf Sperner number} of $A$ is defined by  
\[\mbox{Sperner}\; A = \max\, _{i} \{\dim _K A_i\}.  \] 
}
\end{definition} 

\begin{proposition}[Subring Theorem]  \label{subring_theorem}
Let $A=\bigoplus _{i=0} ^c A_i$  be a graded Artinian $K$-algebra with the strong Lefschetz property.  
Assume that $A_c \not = 0$.  
Suppose that $B$ is a graded $K$-subalgebra of $A$, that $B_c=A_c$,  and $B_1$ contains a strong Lefschetz element for $A$. 
Then if $B$ has a symmetric Hilbert function, $B$ has the strong Lefschetz property.  
\end{proposition}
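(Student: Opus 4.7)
The plan is to show that the same element $l \in B_1$ that is a strong Lefschetz element for $A$ also serves as a strong Lefschetz element for $B$. Fix such an $l \in B_1 \subset A_1$. Since $B_c = A_c \ne 0$ and $B_i \subseteq A_i = 0$ for $i > c$, the socle degree of $B$ equals $c$, so the hypothesized symmetry of the Hilbert function of $B$ is symmetry about $c/2$. Thus to establish SLP for $B$ it suffices to show that
\[
\times l^{c-2i} \colon B_i \longrightarrow B_{c-i}
\]
is bijective for every $i \le [c/2]$.

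The key step is to combine injectivity coming from $A$ with dimension equality coming from the Hilbert function of $B$. Because $B$ is a graded subalgebra containing $l$, the image $l^{c-2i} B_i$ is indeed contained in $B_{c-i}$, so the map above makes sense as written. On the other hand, since $l$ is a strong Lefschetz element for $A$, the map $\times l^{c-2i} \colon A_i \to A_{c-i}$ is bijective, in particular injective, for $i \le [c/2]$; restricting this injection to the subspace $B_i \subseteq A_i$ gives an injection $B_i \hookrightarrow B_{c-i}$. By the assumed symmetry of the Hilbert function of $B$, $\dim_K B_i = \dim_K B_{c-i}$, so an injective linear map between two finite dimensional $K$-vector spaces of equal dimension must be a bijection. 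This is exactly the strong Lefschetz condition for $(B,l)$.

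There is no serious obstacle here: the argument is essentially a pigeonhole observation built on top of the hypotheses. The only small points that need articulation are (i) why the symmetry axis of $B$ is forced to be $c/2$, which uses $B_c = A_c \ne 0$ together with $B_i = 0$ for $i > c$; and (ii) why $l^{c-2i} B_i$ lies in $B_{c-i}$ rather than merely in $A_{c-i}$, which uses that $B$ is a subalgebra and $l \in B_1$. Once these two points are noted, the proof reduces to the single observation that an injective linear map between equidimensional finite-dimensional spaces is an isomorphism, and the conclusion follows.
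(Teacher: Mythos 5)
Your proof is correct and follows essentially the same route as the paper's: restrict the injective map $\times l^{c-2i}\colon A_i \to A_{c-i}$ to $B_i$ and use the symmetry of the Hilbert function of $B$ to upgrade injectivity to bijectivity. The extra remarks on why the symmetry axis is $c/2$ and why $l^{c-2i}B_i \subseteq B_{c-i}$ are sensible elaborations of points the paper leaves implicit.
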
 

\begin{proof}
Let $l \in B_1$ be a strong Lefschetz element for $A$.  Consider the diagram:
\[
\begin{array}{ccc}
A_i         & \to        & A_{c-i} \\ 
\hookuparrow    &     & \hspace{-3ex}\hookuparrow \\  
B_i         & \to        & B_{c-i},  
\end{array}
\] 
where the vertical arrows are natural injections and horizontal arrows are 
the multiplication map by $l^{c-2i}$.    
The strong Lefschetz property of $A$ implies that $\times l^{c-2i} : B_i \to B_{c-i}$ is injective. 
Since $\dim _K\;(B_i)=\dim _K\; (B_{c-i})$, it is bijective.  
\end{proof}

\newcommand {\slx}{{\mathfrak{sl} _2}}

\section{The polynomial ring and the action of the symmetric group}
Let $R=K[x_1, x_2, \cdots, x_n]$  be the polynomial ring over $K$,  a field of characteristic zero, and  
let $S_n$ be the symmetric group.  The homogeneous part of $R$ of degree $d$ is 
denoted by $R_d$.   We let the symmetric group $S_n$ act on $R$ by permutation of the variables. 
An element  $\sigma \in S_n$ is a bijection 
of the set $\{1,2, \cdots, n\}$.  Thus 
$\sigma$ induces the automorphism of the $K$-algebra $R$ by 
\[f^{\sigma}(x_1, x_2, \cdots, x_n)= f(x_{\sigma(1)}, x_{\sigma(2)}, \cdots, x_{\sigma(n)}).\] 
We recall some basic facts on the representation of $S_n$ and its  action  on $R$ and fix some notation. 
\begin{fact} \label{fact_on_representation}
\rm{
\begin{enumerate}
\item \label{fact_1} 
The irreducible representations of $S_n$ are parametrized by the Young diagrams of $n$ boxes.
A Young diagram of $n$ boxes is denoted by a partition $\lambda \vdash n$, which is 
a non-decreasing sequence of positive integers 
$\lambda =(\lambda _1, \lambda _2, \cdots, \lambda _k)$ such that $\sum \lambda _i=n$.   
\item \label{fact_2}
We will denote by $V^{\lambda}$ the irreducible module (uniquely determined up to isomorphism) 
corresponding to $\lambda$. 
The dimension of $V^{\lambda}$ is determined by the hook length formula. (See e.g., \cite{gJaK}, \cite{bS}).) 
Mostly we are interested in partitions of $n$ with at most two rows. Such partitions will be denoted as  
\[(n,0), (n-1,1), \cdots, (n-[n/2], [n/2]).\]
Note that  $(n,0)$ denotes the partition with one row.  
\item \label{fact_3}
Let $U$ be a finite dimensional $S_n$-module. The vector space $U$ decomposes as 
$U=\bigoplus _{\lambda \vdash n}U_{\lambda}$, where $U_{\lambda}$ is a sum of  copies of $V^{\lambda}$.  The number of times 
the irreducible module $V^{\lambda}$ occurs in $U$ is the multiplicity of $V^{\lambda}$.         
Such a decomposition of $U$ is unique up to order of the factors. 
In other words if  
$U= \bigoplus _{\lambda \vdash n} U_{\lambda} = \bigoplus _{\lambda \vdash n} U_{\lambda}'$, 
then 
$U_{\lambda}=U_{\lambda}'$ (as vector subspaces  of $U$) for all $\lambda \vdash n$. 
Such a decomposition is called the {\bf isotypic decomposition} of $U$. 
\item  \label{fact_4}
We denote by $Y^{\lambda}$ the Young symmetrizer corresponding to $\lambda \vdash n$. 
For the meaning of Young symmetrizers we refer the reader to \cite{gJaK} or \cite{bS}. 
In the sequel all we have to know about $Y^{\lambda}$ is that it gives  the 
projection on to the $\lambda$ isotypical summand 
\[Y^{\lambda}:U \to U_{\lambda},\]
for any  $S_n$-module $U$, where $U=\bigoplus _{\lambda} U_{\lambda}$ is the  isotypic decomposition. 
For example, if $\lambda =(n,0)$, then $Y^{\lambda}(R)$ coincides with  the ring  $R^{S_n}$ of invariants 
of $R$ under the action of $S_n$ and $Y^{\lambda}$ is the usual averaging homomorphism. 
It is well known that $R^{S_n}$ is, as a $K$-algebra, generated by the  elementary symmetric polynomials. 
The elementary symmetric polynomial of degree $d$ will be denoted by $e_d$.  Thus 
we have \[Y^{(n,0)}(R)=R^{S_n}=K[e_1, e_2, \cdots, e_n].\]
\item  \label{fact_5}
The degree one part  $R_1$ of $R$ decomposes, as an  $S_n$-module, as 
\[R_1 \cong V^{(n,0)} \oplus  V^{(n-1, 1)}.\]

Typical bases for these modules are: 
\begin{eqnarray*}
\langle x_1 + x_2 + \cdots + x_n   \rangle  &  \mbox{ for }   &    V^{(n,0)} \\
\langle x_1 -x_2, x_1 - x_3 ,  \cdots , x_1 - x_n   \rangle   & \mbox{ for } &    V^{(n-1,1)} 
\end{eqnarray*}

\item  \label{fact_6}
The degree two part  $R_2$ of $R$ decomposes, as an  $S_n$-module, as 
\[R_2 \cong V^{(n,0)} \oplus  V^{(n,0)} \oplus  V^{(n-1, 1)}  \oplus V^{(n-1, 1)}  \oplus V^{(n-2, 2)}.\]

For $V^{(n,0)}$ we can choose $\langle e_1^2 \rangle$ and $\langle e_2 \rangle$ as bases. 

For $V^{(n-1,1)}$ we can choose 
$\langle x^2_1 -x^2_2, x^2_1 - x^2_3 ,  \cdots , x^2_1 - x^2_n   \rangle$   and 
\newline 
$\langle (x_1 -x_2)e_1, (x_1 - x_3)e_1 ,  \cdots , (x_1 - x_n)e_1   \rangle$ as bases. 

A typical basis for $V^{(n-2,2)}$ is the set of Specht polynomials of shape $(n-2, 2)$: 
\[ \{(x_1-x_j)(x_2-x_k) \mid 3 \leq j < k \leq n \} \cup \{
(x_1-x_2)(x_3-x_k) \mid 4 \leq k \leq n \} \]
For the definition of Specht polynomials see \cite{HMMNWW}\; \S9.3.

\item  \label{fact_7}
By the hook length formula we have  \[\dim V^{(n-i,i)} = {n \choose i} -{n \choose i-1}.\]
In particular,  
\[\dim V^{(n-2,2)} = {n \choose 2} -{n \choose 1} = \frac{n(n-3)}{2},\]
and 
\[\dim R_2 = \left\{\begin{array}{l}
2 \dim V^{(n,0)} + 2 \dim V^{(n-1,1)} +  \dim V^{(n-2,2)}, \mbox{ if $n> 3$}, \\ 
2 \dim V^{(n,0)} + 2 \dim V^{(n-1,1)},  \mbox{ if $n =  3$}, \\ 
2 \dim V^{(n,0)} +  \dim V^{(n-1,1)}, \mbox{ if $n=2$}.
\end{array}\right. \] 
\end{enumerate}
} 
\end{fact}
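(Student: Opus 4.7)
The plan is as follows. Items 1 through 4 are foundational background from the representation theory of $S_n$ and require no proof beyond pointing to the cited references \cite{gJaK,bS}: item 1 is the parametrization of irreducibles by partitions of $n$; item 2 is the hook length formula; item 3 is the uniqueness of the isotypic decomposition for semisimple modules; and item 4 is the standard description of the Young symmetrizer as the $\lambda$-isotypic projector, together with the fact that $Y^{(n,0)}$ is the usual averaging operator, giving $R^{S_n} = K[e_1,\ldots,e_n]$ by the fundamental theorem on symmetric polynomials. The substantive content to verify is in items 5, 6, and 7.

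For item 5, I would observe that $R_1$ is the permutation representation of $S_n$ on $\{x_1,\ldots,x_n\}$. The line spanned by $e_1 = x_1+\cdots+x_n$ is the trivial submodule $V^{(n,0)}$, and the $(n-1)$-dimensional complement spanned by the vectors $x_1 - x_j$ ($2 \leq j \leq n$) carries the standard representation $V^{(n-1,1)}$. Linear independence of the proposed bases and the identification of the complement with $V^{(n-1,1)}$ are immediate from dimension and irreducibility.

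For item 6, I would compute $R_2 = S^2(R_1)$ via the identity $S^2(A \oplus B) = S^2 A \oplus S^2 B \oplus (A \otimes B)$ applied to $A = V^{(n,0)}$, $B = V^{(n-1,1)}$. This reduces the problem to the known decomposition
\[ S^2\bigl(V^{(n-1,1)}\bigr) \cong V^{(n,0)} \oplus V^{(n-1,1)} \oplus V^{(n-2,2)} \]
valid for $n \geq 4$, which follows from a character computation or from the Pieri-type formula for symmetric squares. For each listed basis one then verifies that its span lies in the correct isotypic summand, either by applying the appropriate Young symmetrizer $Y^{\lambda}$ or by exhibiting it as the image of the standard construction of the Specht module. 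The main obstacle is showing that the Specht polynomials of shape $(n-2,2)$ are linearly independent and span $V^{(n-2,2)}$; this is the classical Specht basis theorem, treated in \cite{HMMNWW}\;\S9.3, and it is what makes the last basis in item 6 nontrivial.

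Item 7 is then immediate: the hook length formula applied to the two-row partition $(n-i,i)$ gives $\binom{n}{i} - \binom{n}{i-1}$, and in particular $\dim V^{(n-2,2)} = n(n-3)/2$. The three case-by-case formulas for $\dim R_2$ are numerical consistency checks. For $n \geq 4$ one sums the dimensions of the five isotypic components from item 6 and obtains $2 + 2(n-1) + n(n-3)/2 = n(n+1)/2 = \dim R_2$. For $n=3$ the partition $(1,2)$ is invalid and $V^{(n-2,2)}$ is absent, matching $\dim R_2 = 6 = 2\cdot 1 + 2\cdot 2$. For $n=2$ only one copy of $V^{(n-1,1)} = V^{(1,1)}$ survives, and $\dim R_2 = 3 = 2 \cdot 1 + 1 \cdot 1$.
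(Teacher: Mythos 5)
Your verification is correct, and it matches the paper's treatment: the paper states these items as recalled background facts with no proof beyond the references \cite{gJaK}, \cite{bS}, and \cite{HMMNWW}, which is exactly where you send the foundational items 1--4 and the Specht basis theorem. Your dimension checks for item 7 and the reduction of item 6 to $S^2(V^{(n,0)}\oplus V^{(n-1,1)})$ are accurate, including the degenerate cases $n=3$ (where $V^{(n-2,2)}$ vanishes) and $n=2$ (where the two candidate copies of $V^{(1,1)}$ coincide).
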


\begin{lemma}   \label{one_dim_submodule_in_R_2} 
With the same notation as above, suppose that $U \subset R_1e_1$ is a one dimensional 
$S_n$ module. If $n \geq 3$, then 
$U$ is spanned by $e_1^2$.  If $n=2$, then $U$ is spanned either by $e_1^2=e_1(x_1+x_2)$ or $e_1(x_1-x_2)$. 
\end{lemma}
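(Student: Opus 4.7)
The plan is to reduce to the structure of $R_1$ as an $S_n$-module via multiplication by $e_1$, then read off one-dimensional submodules from the isotypic decomposition.

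First, I observe that $e_1$ is $S_n$-invariant, so multiplication by $e_1$ defines an $S_n$-equivariant linear map $R_1 \to R_1 e_1$. This map is injective because $R$ is a domain and $e_1 \neq 0$, and it is surjective by definition. Hence $R_1 e_1 \cong R_1$ as $S_n$-modules, and by Fact \ref{fact_on_representation}(\ref{fact_5}) this is $V^{(n,0)} \oplus V^{(n-1,1)}$, with explicit images $\langle e_1 \cdot e_1 \rangle = \langle e_1^2 \rangle$ for the trivial summand and $\langle (x_1-x_j)e_1 : 2 \le j \le n\rangle$ for the standard summand.

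Next, any one-dimensional $S_n$-submodule $U$ is irreducible, and by Maschke's theorem it must appear as a summand in any isotypic decomposition, so $U$ is isomorphic to either $V^{(n,0)}$ or $V^{(n-1,1)}$.

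For $n \ge 3$, the second possibility is ruled out by the hook length formula: $\dim V^{(n-1,1)} = n-1 \ge 2$. Therefore $U$ lies inside the $V^{(n,0)}$-isotypic component of $R_1 e_1$. That component has dimension $1$ (since $V^{(n,0)}$ occurs with multiplicity one in $R_1$), is uniquely determined by Fact \ref{fact_on_representation}(\ref{fact_3}), and equals $\langle e_1^2 \rangle$ from the calculation above. Hence $U = \langle e_1^2 \rangle$.

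For $n=2$, both $V^{(2,0)}$ and $V^{(1,1)}$ are one-dimensional, but they are non-isomorphic (trivial vs.\ sign). So the isotypic decomposition $R_1 e_1 = \langle e_1^2\rangle \oplus \langle (x_1-x_2)e_1\rangle$ is the unique decomposition into irreducibles, and the only one-dimensional submodules are these two summands themselves. The only real point to be careful about is this dichotomy between $n\ge 3$ and $n=2$; apart from that, the argument is just dimension counting combined with uniqueness of the isotypic decomposition, so no serious obstacle arises.
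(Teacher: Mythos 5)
Your proof is correct and follows essentially the same route as the paper: identify $R_1e_1\cong R_1$ as $S_n$-modules, use the isotypic decomposition $V^{(n,0)}\oplus V^{(n-1,1)}$, and rule out the second summand for $n\ge 3$ since $\dim V^{(n-1,1)}=n-1>1$. Your version just spells out the equivariance/injectivity of multiplication by $e_1$ and the uniqueness of isotypic components more explicitly than the paper does.
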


\begin{proof} Note that $U \cong R_1$  as an $S_n$-module.  
Since $R_1=Y^{(n,0)}(R_1) \oplus Y^{(n-1,1)}(R_1)$ is the isotypic decomposition, 
and $\dim Y^{(n-1,1)}(R_1) =1$ if $n=2$ and $ \dim Y^{(n-1, 1)}(R_1) > 1$ if  $n \geq 3$.  
Thus the assertion follows. 
\end{proof} 

\section{Main result} \label{main_result} 

As in the previous section  $R=K[x_1, x_2, \cdots, x_n]$ denotes  the polynomial ring over a field  $K$ of 
characteristic zero. 
We are interested in the sequences of  quadrics  $f_i \in R$   which satisfy the following conditions: 
\begin{enumerate}
\item[(1)] 
$f_1, f_2, \cdots, f_n$ form a regular sequence. 
\item[(2)]
For any $\sigma \in S_n$, 
\[f_i(x_{\sigma(1)}, x_{\sigma(2)}\cdots, x_{\sigma(n)}) = f_{\sigma(i)}(x_1,x_2, \cdots, x_n), 
\mbox{ for }i=1,2, \cdots, n.\]
\end{enumerate}

\begin{remark}   \label{notation_for_generators}
{\rm  If $(f_1, f_2, \cdots, f_n)$ is a sequence of quadrics in $R$ with the second property above, 
the stabilizer of $f_1$ must be the subgroup 
$S_{n-1}$ of $S_n$ which fixes 1. Hence the element  $f_1$  has the from 
\[p_0x_1 ^2 + p_1(x_2+ x_3 + \cdots + x_n )x_1 + p_2(x_2^2+x_3^2+ \cdots + x_n^2) + p_3(\sum _{2 \leq i < j \leq n}x_ix_j),\]
with four parameters $p_k$ and the elements $f_i$ are obtained by cyclically permuting the variables.
For such a sequence to be a regular sequence it is necessary and sufficient that the resultant does not vanish.
We refer the interested reader to~\cite{GKZ} for details on the resultants of homogeneous forms.   
}
\end{remark}

\begin{example}\rm{
Put $R=K[x,y,z]$, $e=x+y+z$, $f=(e-ax)(e-bx)$, $g=(e-ay)(e-by)$, $h=(e-az)(e-bz)$. 
Assume that \[ab(a-3)(b-3)(ab-a-2b)(ab-2a-b) \neq 0.\]
(This is equivalent to the resultant of $f,g,h$ mod the exponents.) 
Then the sequence \[f,g,h\] satisfy the conditions (1) and (2) of the first paragraph of this section.   
}
\end{example}

\begin{theorem} \label{main_thm1}  Assume that the characteristic of  $K$ is zero.  
Let $I=(f_1, f_2, \cdots, f_n)$ be a complete intersection ideal in $R$ which satisfies 
the conditions (1) and (2) above. 
Then $A:=R/I$ has the strong Lefschetz property. 
Let $e_1=\sum x_i$. If $e_1^2 \not \in I$, then 
$e_1$ is a strong Lefschetz element for $A$. 
\end{theorem}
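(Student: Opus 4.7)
Plan. The ring $A = R/I$ is a complete intersection of $n$ quadrics in $n$ variables, so by the Koszul complex its Hilbert series is $(1-T^2)^n/(1-T)^n = (1+T)^n$; hence $\dim_K A_i = \binom{n}{i}$, the socle degree equals $c = n$, the Hilbert function is symmetric, and $A$ is Artinian Gorenstein. By condition~(2), $I$ is an $S_n$-submodule of $R$, so each $A_i$ carries an $S_n$-action; since $e_1 \in R^{S_n}$, the multiplication map $L = \times e_1 \colon A_i \to A_{i+1}$ is $S_n$-equivariant. By the symmetry of the Hilbert function, proving that $e_1$ is a strong Lefschetz element reduces to showing that $L^{n-2i} \colon A_i \to A_{n-i}$ is injective for every $i \le n/2$.

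The strategy is to combine $S_n$-representation theory with a deformation to the monomial case. Decompose $A_i = \bigoplus_\lambda A_{i,\lambda}$ into isotypic components, which $L^{n-2i}$ respects because it is $S_n$-equivariant. By Schur's lemma the restriction $L^{n-2i} \colon A_{i,\lambda} \to A_{n-i,\lambda}$ is the tensor product of the identity on $V^{\lambda}$ with a map between the multiplicity spaces, and its bijectivity is captured by a determinant $\Delta_{i,\lambda}$ that depends polynomially on the four parameters $(p_0,p_1,p_2,p_3)$ of Remark~\ref{notation_for_generators}. At the monomial point $(1,0,0,0)$, corresponding to $I_0 = (x_1^2,\ldots,x_n^2)$, Ikeda's Lemma yields that $e_1$ is a strong Lefschetz element, so every $\Delta_{i,\lambda}(1,0,0,0) \ne 0$; in particular each $\Delta_{i,\lambda}$ is a nonzero polynomial in the parameters.

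The main obstacle is to identify the irreducible factors of each $\Delta_{i,\lambda}$ and to match them against the two polynomial conditions in play: the resultant governing condition~(1), and the polynomial condition $e_1^2 \not\in I$. An averaging argument---writing a hypothetical relation $e_1^2 = \sum c_i f_i$ and applying $\frac{1}{n!}\sum_{\sigma\in S_n}\sigma$---shows that $e_1^2 \in I$ is equivalent to a single linear polynomial vanishing on $(p_0,p_1,p_2,p_3)$; Lemma~\ref{one_dim_submodule_in_R_2} pinpoints $e_1^2$ as the generator of the unique trivial-representation copy inside $R_1 e_1 \subset R_2$. I expect $\Delta_{i,(n,0)}$ to carry this linear factor, while for $\lambda \ne (n,0)$ the determinant $\Delta_{i,\lambda}$ involves only factors of the resultant. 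Completing this factor identification---the representation-theoretic analogue of a discriminant calculation---is the technical heart of the argument. Once it is in hand, the remaining assertion that $A$ has SLP even when $e_1^2 \in I$ should follow by taking a generic non-invariant linear form as a Lefschetz element, using the openness of SLP in the flat family of algebras $R/I(p)$.
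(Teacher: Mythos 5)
Your setup (Hilbert series $(1+T)^n$, $S_n$-equivariance of $\times e_1$, reduction to injectivity of $L^{n-2i}$ on isotypic pieces) is sound, but the argument has two genuine gaps, and at both points the paper does something quite different.

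First, in the main case $e_1^2\notin I$, your plan only establishes that each determinant $\Delta_{i,\lambda}$ is a \emph{nonzero} polynomial in $(p_0,p_1,p_2,p_3)$, which gives the SLP for \emph{generic} parameters. To get it for \emph{all} parameters satisfying (1), (2) and $e_1^2\notin I$ you would need to prove that the zero locus of every $\Delta_{i,\lambda}$ is contained in the union of the resultant hypersurface and the hyperplane $\{e_1^2\in I\}$; you state this as an expectation (``I expect $\Delta_{i,(n,0)}$ to carry this linear factor\dots'') and give no method to verify it. This is exactly the step that cannot be waved away: Theorem~\ref{main_thm2} and the examples following it show that in this same family other properties (standard grading of $A^G$) do degenerate on proper closed subsets where (1) and (2) still hold, so genericity alone proves nothing about a given member. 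The paper avoids the determinant computation entirely: it shows, using Lemma~\ref{one_dim_submodule_in_R_2} and a count of the copies of $V^{(n,0)}$ and $V^{(n-1,1)}$ in $R_2$, that $R_1e_1+(I\cap R_2)=R_1e_1+(x_1^2,\dots,x_n^2)\cap R_2$, hence $I+e_1R=(x_1^2,\dots,x_n^2,e_1)$. This identifies $A/e_1A$ with $B/e_1B$ for the monomial complete intersection $B$, and the Sperner-number inequality $\dim_K(A/e_1A)\ge \mathrm{Sperner}(A)$ together with the WLP of $B$ forces equality, giving the WLP of $A$ with $e_1$. The WLP is then upgraded to the SLP because each isotypic component $Y^{\lambda_i}(A)$ has \emph{constant} Hilbert function (here $A\cong B$ as graded $S_n$-modules), so Lemma~\ref{wl_implies_sl} applies componentwise. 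This is the representation-theoretic input you were looking for, but used to equate two ideals rather than to factor a determinant.

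Second, your treatment of the case $e_1^2\in I$ fails: openness of the SLP in the flat family $R/I(p)$ gives the SLP on a Zariski-open set of parameters, but the locus $e_1^2\in I$ is a proper closed subset, so a member lying on it is precisely one that openness says nothing about. The paper instead observes that $K[z]/(z^2)\to A$, $z\mapsto e_1$, is a flat extension whose fiber is a quadratic complete intersection in $n-1$ variables carrying the same $S_{n-1}$-symmetry, applies induction on $n$, and concludes by the Flat Extension Theorem. You would need some such separate argument here.
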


\begin{proof}
If $e_1^2 \in I$, we can choose $e_1^2$ as a generator of the ideal $I$.  Let $B=K[z]/(z^2)$,  
with a new variable $z$,  
and define the map $B \to A$ by $z \mapsto e_1$.  
It is easy to see that $B \to A$ is a flat extension and the fiber, say $C$, is the algebra 
\[C=K[x_1, x_2, \cdots, x_n]/(e_1,f_1, f_2, \cdots, f_{n}).\]
For $i \geq 2$, let $f_i'$ be the polynomial obtained from $f_i$ by the substitution 
\[x_1 \mapsto -(x_2+x_3 + \cdots + x_n).\]
It is easy to see that  
\[C \cong K[x_2, x_3, \cdots, x_n]/(f_2', f_3', \dots, f_n')\] 
so if $\sigma \in S_{n}$ fixes $1$,  then  $(f_i')^{\sigma}=f'_{\sigma(i)}$. 
Hence we may induct on $n$ to conclude that the fiber has the SLP. 
By the Flat Extension Theorem (\cite{HMMNWW} Theorem~4.10), the ring 
$A$ has the strong Lefschetz property.  

For the rest of proof we assume that $I$ does not contain $e_1^2$. 
We want to show that $(I \cap e_1R) \cap R_2 =0$ if $n \geq 3$.
Let  $ h \in (I \cap e_1R) \cap R_2$.  
Since $I$ is generated by a regular sequence, any two linearly independent elements in 
$I \cap R_2$ are not  contained in a principal ideal. 
This implies that  $(I \cap e_1R) \cap R_2$ is at most one dimensional. 
If $\sigma \in S_n$, it forces $h^{\sigma}R = hR$.   In other words $h$ is a semi-invariant.  
By Lemma~\ref{one_dim_submodule_in_R_2}, the element $h$ is a scalar multiple of $e_1^2$ if $n \geq 3$. 
Since we have assumed that $I$ does not contain $e_1^2$, we have $h=0$. 
If $n=2$, $h$ could be $x_1^2 - x_2^2$, but in any case  $A$ has the SLP as  is easily checked.  

From now on we assume that $n \geq 3$.  
Then the sum $R_1e_1 + (I \cap R_2)$ is a direct sum and it contains two copies of $V^{(n,0)}$ 
and two copies of $V^{(n-1,1)}$, since both of $R_1e_1$ and  $(I \cap R_2)$ are equivalent to $V^{(n,0)} \oplus V^{(n-1,1)}$.  
  On the other hand by FACTS~\ref{fact_on_representation}\;(\ref{fact_5} and \ref{fact_6}), 
$R_1e_1 + (x_1^2 , \cdots, x_n^2)$  also contains two copies of  $V^{(n,0)}$ and two  copies of $V^{(n-1,1)}$.  
By FACTS~\ref{fact_on_representation}\;(\ref{fact_3}), we see that 
\[R_1e_1 + (I \cap R_2) = R_1e_1 + (x_1^2 , \cdots, x_n^2) \cap R_2,\]
 and  
\[I+e_1R=(x_1^2, x_2^2, \cdots, x_n^2, e_1).\] 
In particular the ideal 
$I + e_1R$ contains all the second power of the variables. 

Put $B=R/(x_1^2, x_2^2, \cdots, x_n^2)$.  
Generally, it is the case 
that  $\dim _K\; (A/e_1A) \geq \mbox{Sperner}\; (A)$. (See the proof of \cite{HMMNWW}~Proposition~3.5.)  
Thus we have 
\[\dim _K (B/e_1B) = \dim _K (A/e_1A) \geq  \mbox{Sperner}\; (A) = \mbox{Sperner}\; (B), \]
and since $B$ has the weak Lefschetz property (\cite{HMMNWW} Corollary~3.69), 
this implies that \linebreak[4] $\dim _K (A/e_1A) = \mbox{Sperner}\; (A)$. 
Hence $A$ has the weak Lefschetz property.

We have to prove that $A$ has the strong Lefschetz property with $e_1$ as an SL element. 
Let $J=(x_1^2, x_2^2, \cdots, x_n^2)$.  
Put $\lambda _i=(n-i, i)$ for $i=0,1,2,\cdots, [n/2]$. Since the 
way $S_n$ acts on $R/J$ and $R/I$ are the same, 
$A$ and $B$ are isomorphic as $S_n$-modules. 
So we may apply \cite{HMMNWW} Theorem~9.9 to $A$. 
Since the $S_n$-module  $A$ decomposes as 
\[A=\bigoplus _{i=0}^{[n/2]}Y^{\lambda _i}(A)\] and since the multiplication map 
$\times e_1:A \to A$ decomposes as the sum of the restricted maps 
\[\times e_1: Y^{\lambda _i}(A)  \to Y^{\lambda _i}(A),\ i=0,1, \cdots, [n/2],\] 
it suffices to prove that the endomorphism  
$\times e_1: Y^{\lambda _i}(A)  \to Y^{\lambda _i}(A)$ is a strong Lefschetz element 
for each $i$. 
Recall that $Y^{\lambda _i}(A)$ has 
a constant Hilbert function (\cite{HMMNWW} Lemma~9.8), namely its 
Hilbert function is 
\[\left( \dim V^{(n-i,i)} \right)\left(   T^i+T^{i+1}+ \cdots + T^{n-i} \right) .\]
Thus $A$ has the  SLP by Lemma~\ref{wl_implies_sl}. 
\end{proof}

\section{Some consequences}
Recall that a grading of a graded algebra $A=\bigoplus _{i\geq 0} A_i$ is 
{\bf standard} if the algebra $A$ is generated by  elements of degree one over $A_0$.   
So far we have tacitly assumed that the grading for the algebras $R$ and $A$ are standard.  
In this section we consider graded subalgebras which are not necessarily standard graded.  
We continue to assume that the polynomial ring $R$ has the standard grading, i.e., the 
degrees of the variables  are one, but the invariant subrings $R^G$ and $A^G$  most likely do not 
have the standard grading.  We are primarily concerned however with the cases where the  
invariant subrings  $A^G$ {\em do} have the standard grading.    
\begin{theorem} \label{main_thm2} 
Let $A=K[x_1, x_2, \cdots, x_n]/(f_1, f_2, \cdots, f_n)$ be a quadratic complete intersection 
with the action of $S_n$ as in Theorem~\ref{main_thm1}.
As in Remark~\ref{notation_for_generators} we use the notation  
 \[f_1=p_0x_1 ^2 + p_1(x_2+ x_3 + \cdots + x_n )x_1 + p_2(x_2^2+x_3^2+ \cdots + x_n^2) + p_3(\sum _{2 \leq i < j \leq n}x_ix_j).\]
Let $X=\{x_1, x_2, \cdots, x_n\}$ be the set of variables and 
let $X=\bigsqcup _{i=1}^r X_i$ be a partition of the set of variables into $r$ nonempty subsets.   
Put $n_i=\mid X_i\mid$ and let 
\[G=S_{n_1} \times S_{n_2} \times \cdots  \times S_{n_r} \]  be the Young subgroup of 
$S_n$ which acts on $R$ in such a way that $S_{n_k}$ permutes the variables in the block $X_k$ and leaves fixed the 
variables in other blocks.  Assume that $e_1^2 \not \in I$. 
Then  $R^G/(I \cap R^G)$ is a complete intersection with the strong Lefschetz property. 
Let  $S=K[y_1, y_2, \cdots, y_r]$ be the polynomial ring in $r$ variables and let 
\[\phi :S \to A\] 
be the homomorphism defined by $\phi(y_i)= \sum _{x \in X_i}x$. 
Then the image  $\phi(S)$ coincides with $A^G$ for any $(p_0, p_1, p_2, p_3)$ 
in a nonempty open set in the projective space $\PP ^3=\{(p_0, p_1, p_2, p_3)\}$. 
In particular $A^G$ has the standard grading if parameters are general enough.    
\end{theorem}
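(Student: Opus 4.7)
The plan is to split the theorem into two claims: (i) that $A^G = R^G/(I\cap R^G)$ is a complete intersection with the strong Lefschetz property, and (ii) that for generic parameters $(p_0,p_1,p_2,p_3)$ the map $\phi$ is surjective, forcing the standard grading on $A^G$.

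For (i), observe that the Young subgroup $G = S_{n_1} \times \cdots \times S_{n_r}$ is generated by transpositions within each block, which act as pseudo-reflections on $R_1$; since $\mathrm{char}\,K = 0$, the order $|G|$ is invertible. Goto's theorem (cited in the introduction) then gives that $A^G$ is a complete intersection, hence Gorenstein with symmetric Hilbert function. We now apply the Subring Theorem (Proposition~\ref{subring_theorem}) to $B = A^G \hookrightarrow A$. Theorem~\ref{main_thm1} furnishes $e_1 = \sum_i x_i$ as a strong Lefschetz element for $A$, and since $e_1$ is $S_n$-invariant we have $e_1 \in A^G_1$. Because $A$ is a quadratic complete intersection in $n$ variables, its socle degree is $c = n$ and $\dim_K A_n = 1$; as $e_1$ is SL, $e_1^n \neq 0$, and $e_1^n \in A^G_n$ forces $A^G_n = A_n$. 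Proposition~\ref{subring_theorem} then yields the SLP for $A^G$.

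For (ii), we use a semicontinuity argument over the parameter space. Over the open subset $U \subseteq \mathbb{A}^4$ on which $(f_1,\ldots,f_n)$ is a regular sequence (Remark~\ref{notation_for_generators}), the universal algebra $\tilde A = \tilde R/(\tilde f_1,\ldots,\tilde f_n)$ is flat, locally free over $\mathcal{O}_U$ of constant rank $2^n$. By the argument in the proof of Theorem~\ref{main_thm1}, each fiber $A_p$ is isomorphic as an $S_n$-module to the quadratic monomial complete intersection, so the isotypic summand $\tilde A^G$, cut out by the Reynolds operator $\tfrac{1}{|G|}\sum_{g\in G} g$, is locally free of constant rank $\prod_k (n_k+1)$. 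The parametric map $\tilde\phi: S \otimes_K \mathcal{O}_U \to \tilde A^G$ sending $y_i \mapsto \sum_{x\in X_i} x$ has coherent cokernel, and by Nakayama's lemma the locus of surjectivity is open in $U$. At the monomial point $(p_0,p_1,p_2,p_3) = (1,0,0,0)$, the identification $A^G \cong K[y_1,\ldots,y_r]/(y_1^{n_1+1},\ldots,y_r^{n_r+1})$ from the introduction makes $\phi$ manifestly surjective. By scaling invariance, this locus descends to a nonempty open subset of $\PP^3$, on which $A^G$ is generated by $\phi(y_1),\ldots,\phi(y_r) \in A^G_1$, yielding the standard grading.

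The principal obstacle is ensuring that $\tilde A^G$ is uniformly presented across the family, so that the Nakayama/semicontinuity step genuinely produces an open locus. This requires a uniform system of generators for the invariant subring valid for all parameters in $U$, which is precisely the role of the Appendix referenced in the introduction; once those generators are in hand, both the constant-rank assertion and the openness of the surjectivity locus become routine.
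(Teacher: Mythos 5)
Your proposal is correct, and part (i) is essentially the paper's argument: Goto's theorem makes $A^G$ a complete intersection (hence Gorenstein with symmetric Hilbert function), and Proposition~\ref{subring_theorem} with $l=e_1$ gives the SLP; your verification that $(A^G)_n=A_n$ via $e_1^n\neq 0$ is a clean substitute for the paper's observation that the Jacobian determinant $\left|\frac{\pa f_i}{\pa x_j}\right|$ is a socle generator of both $A$ and $A^G$. For part (ii) your strategy agrees with the paper's at its core --- specialize to the monomial point $(1,0,0,0)$, where $A^G\cong K[y_1,\dots,y_r]/(y_1^{n_1+1},\dots,y_r^{n_r+1})$ is visibly standard graded, and then spread out by an openness/Nakayama argument over the parameter space --- but the implementation is genuinely different. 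The paper constructs the universal family $\Lambda'$ of invariant rings over $Q=K[P_0,\dots,P_3,{\mathcal R}^{-1}]$ by means of the explicit uniform generators $F_1',\dots,F_n'$ supplied by Corollary~\ref{uniform_generator}, and then invokes Lemma~\ref{torsion_part}, which packages the openness as the nonvanishing of ${\rm Ann}_Q(M/N)$ for the cotangent-space module $M/N$. You instead realize the family of invariants as the image of the Reynolds idempotent on the locally free universal algebra $\tilde A$, so that $\tilde A^G$ is automatically a locally free direct summand whose formation commutes with base change, and conclude by noting that the cokernel of $\tilde\phi$ is coherent with closed support. These are the same Nakayama argument in different clothing, but your route has the merit that the idempotent splitting already certifies the uniform behaviour of $\tilde A^G$ across the family, so the Appendix is not strictly needed for this step (it is still what gives the paper an explicit complete-intersection presentation of $A^G$ for all parameters); also, the constancy of the rank $\prod_k(n_k+1)$ follows from local constancy of the rank of the image of an idempotent on a locally free sheaf, so the appeal to the $S_n$-module isomorphism $A\cong R/(x_1^2,\dots,x_n^2)$ can be avoided there. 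Your remark that the surjectivity locus, being invariant under scaling of $(p_0,p_1,p_2,p_3)$, descends to a nonempty open subset of $\PP^3$ correctly supplies a detail the paper leaves implicit.
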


\begin{proof}
The Young subgroup $G \subset S_n$ is generated by reflections. By a theorem of Goto \cite{Goto_1}, the ring of invariants 
$A^G=(R/I)^G=R^G/(I \cap R^G)$ is a complete intersection. Note that $e_1 \in A^G$ and $e_1$ is an SL element for $A$. 
Moreover the image of the Jacobian determinant $\left|\frac{\pa f_i}{\pa x_j}\right|$ in $A$  
can be be taken as a socle generator for both $A$ and $A^G$.  
By Proposition~\ref{subring_theorem} the ring $A^G$ has 
the strong Lefschetz property if $e_1 \in A$ is a strong Lefschetz element. 
This is a proof for the first assertion of this theorem. 
For the second assertion we prove Lemma~\ref{torsion_part} first.

\begin{lemma} \label{torsion_part} 
Let $Q$ be a Noetherian integral domain. Suppose that $R=\bigoplus _{i\geq 0} R_i$ is a graded $Q$-algebra,  
finitely generated over $R_0=Q$. (We assume that the graded pieces $R_i$ are free $Q$-modules.) 
If for some maximal ideal $\gm_0 \subset Q$, the fiber  $R/{\gm _0}R:=R\otimes _Q Q/{\gm _0}$ has a standard grading, 
then there exists an ideal  $\ga \neq 0$ in $Q$ such that 
$R/{\gp}R$ has the standard grading for all prime ideals $\gp  \not  \supset \ga$. 
\end{lemma}

\begin{proof}
Let $Y=\{Y_1, Y_2, \cdots, Y_r\}$  be a set of homogeneous elements in $R$ such that 
$Y$ generates the algebra: $R=Q[Y_1, Y_2, \cdots, Y_r]$. Let $M', M''$ be the $Q$-submodules of 
$R$ as follows: 
${\displaystyle M'=\sum _{i_1+i_2+\cdots + i_r \geq 1}QY_1^{i_1}Y_2^{i_2}\cdots Y_r^{i_r},}$
and 
${\displaystyle M''=\sum _{i_1+i_2+\cdots + i_r \geq 2}QY_1^{i_1}Y_2^{i_2}\cdots Y_r^{i_r}  }$. 
Furthermore put
$M=M'/M''$.  
Note that $M$ is a graded $Q$-module and $M \otimes _Q Q/\gm$ is the tangent space of $R \otimes _Q Q_{\gm}$ 
for any maximal ideal $\gm  \subset Q$.   
It is easy to see that the fiber $R \otimes _Q Q/\gm $ 
has the standard grading if and only if  $M \otimes _Q Q/\gm$  is spanned by 
the homogeneous elements of degree one. 
Let $N$ be the $Q$-submodule of $M$  generated by 
the degree one elements and put $\ga = \Ann _Q (M/N)$. 
Recall that $\gp \supset \ga \Leftrightarrow (M/N)_{\gp} \neq 0$ for a prime ideal $\gp$ in $Q$.
(See \cite{matsumura}~The paragraph preceding Theorem~4.4.)  
Since there exists at least one maximal ideal such that 
$M \otimes _Q Q/\gm$ has the standard grading, we have $\ga \neq 0$.  
Then it is straightforward that $\ga$ has the desired property. 
\end{proof}

{\it Proof of the second part of Theorem~\ref{main_thm2}.}
Define the polynomials  $F$ and  $F_i$  by 
\[F=P_0x_1^2 + P_1(\sum _{j=2}^n x_j)x_1+ P_2(\sum _{j=2}^nx_j^2) + P_3(\sum _{2 \leq k < l \leq n}x_kx_l), \]
and 
$F_i=F^{\sigma ^{i-1}}, i=1,2, \cdots, n$, where  $P_0, \cdots, P_3$ are indeterminates and 
$\sigma=(12\cdots n)$ is the cycle of length $n$. 
These  are considered as  polynomials in the variables $x_1, x_2, \cdots, x_n$
with coefficients in  $K[P_0, P_1, P_2,P_3]$.    
Let $\cal{R}$ be the resultant of $F_1, F_2, \cdots, F_n$. 
Put $Q=K[P_0,P_1, P_2, P_3, {\cal R}^{-1}]$ and we consider the algebra 
$Q[x_1, \cdots, x_n]/(F_1, \cdots, F_n)$. 
In the decomposition of the variables   $X=\bigsqcup _{i=1}^r X_i$ we may assume 
that the blocks $X_i$ consist of variables of consecutive indices.  
So we assume that the $i$-th block $X_i$ is  
\[ X_i=\{x_{n_1+ \cdots + n_{i-1}+1}, \cdots, x_{n_1+\cdots + {n_{i}}}\}. \]
The numbering of the variables may be illustrated as follows:
\[\underbrace{x_1, \cdots, x_{n_1}}_{n_1}, \underbrace{x_{n_1+1}, \cdots, x_{n_1+n_2}}_{n_2}, x_{n_1+n_2+1},  
\cdots, \underbrace{x_{n-n_r + 1} \cdots, x_{n}}_{n_r}.\]
For the sake of notation we rename the variables as : 
\[x_{ij}=  x_{n_1+ n_2 + \cdots + n_{i-1} +j},\]
so $x_{ij}$ is the $j$-th variable in the $i$-th block.  
Suppose that $X_0$ is a set of  variables.  
Then by $e_d(X_0)$ we denoted the elementary symmetric polynomial of degree $d$ in the variables in $X_0$. 

Introduce a set of new variables $Y_{ij}$ of degree one which are indexed as follows:
\begin{enumerate}
\item
The first index $i$ ranges $i=1,2, \cdots, r$.
\item
The second index $j$ ranges, depending on $i$, over  $j=1,2, \cdots, n_i$.
\end{enumerate}
Define the polynomials $\{ E_{ij}\}$  with the same indices as the variables $\{Y_{ij}\}$
as follows: 
\[ E_{id}=e_{d}(\{Y_{i1}, Y_{i2}, \cdots, Y_{in_i}\}) \mbox{ for } i=1,2, \cdots, r,  \;  d= 1, 2, \cdots, n_i. \]
Define the algebras $\Lambda$ and $\Lambda '$  by  
\[\Lambda = Q[\{Y_{ij}\}]/(F_1, \cdots, F_n),\] 
\[\Lambda '=Q[\{E_{ij}\}]/((F_1, \cdots, F_n) \cap Q[\{E _{ij}\}].\]
Note that $\Lambda$ is mapped onto $A$ by the specialization
$P_k \mapsto p_k, Y_{ij} \mapsto x_{ij}$. 
By 
Corollary~\ref{uniform_generator} in the Appendix, 
it is possible to write 
\[
(F_1, \cdots, F_n) \cap Q[\{E _{ij}\}]=(F_1', F_2', \cdots, F_n').
\]
(Note that $F_1', \cdots, F_n'$ are constructed from $F_1, \cdots, F_n$ explicitly.)  
The algebra $\Lambda '$ is a flat extension of $Q=K[P_0, P_1, P_2,P _3, {\cal R}^{-1}]$ and each fiber 
coincides  with $A^G$ under the map $Y_{ij} \mapsto x_{ij}$.  

On the other hand the image of $\phi: S \to A$ is a subring of $A^G$ and they coincide if and only if 
$A^G$ has the standard grading or equivalently $A^G$ is generated by degree one elements.  
For $P_0=1, P_1=P_2=P_3=0$, it is easy to see that the fiber has the standard grading (cf.\ \cite{HMMNWW}\;Lemma~3.70).  
Thus we may apply Lemma~\ref{torsion_part}. This completes the proof of Theorem~\ref{main_thm2}.  
\end{proof}

\begin{remark} \label{monomial_subring} 
\rm{
Let $\Lambda$ be the algebra defined in the last paragraph of the proof of the second part of Theorem~\ref{main_thm2}. 
The fiber for $p_0=1, p_1=p_2=p_3=0$ is isomorphic to  
the  monomial complete intersection
\[K[y_1, y_2, \cdots, y_r]/(y_1^{n_1 +1}, y_2^{n_2 +1}, \cdots,  y_r^{n_r +1}).\]
This is proved  if  $r=2$ in   \cite{HMMNWW}\;Lemma~3.70. The same proof in fact  works for all $r$.   
}
\end{remark}

The following example shows that a member can fail to have the SLP in a flat family of Artinian algebras 
whose general members have the SLP.  It also shows that the embedding dimension is not a constant in a flat family 
of Artinian algebras. 
\begin{example} 
\rm{
Let $p, x, y$ be  variables and consider $K[p,x,y]/(y^2, x^3-py)$. 
We regard it as a family of Artinian algebras. 
Give the variables $p,x,y$ degrees $0,1,3$ respectively.
For any  $p \in K$,  the Hilbert function of $R$ is 
\[\frac{(1-T^3)(1-T^6)}{(1-T)(1-T^3)}=\frac{(1-T^3)}{(1-T)}(1+T^3)=\frac{1-T^6}{1-T}=1+T+ \cdots + T^5.\]
If $p=0$, the fiber is $K[x,y]/(x^3, y^2)$ and if $p \neq 0$, then 
it is $K[x]/(x^6)$.  
}
\end{example}

The following example illustrates Theorem~\ref{main_thm2}.  
\begin{example} {\rm 
Consider the family of the Artinian algebras  
\[K[p_0, \cdots, p_3][v,w,x,y,z]/(f_1, f_2, f_3, f_4, f_5),\] 
on which $S_5$ acts by the permutation of the variables $\{v, w, x, y, z\}$    
and 
$f_i^{\sigma} =f_{\sigma (i)}$ for  $\sigma \in S_5$. 
We will use the same notation as the first paragraph of 
Section~\ref{main_result}, and 
Remark~\ref{notation_for_generators} with $n=5$, so  
\[f_1=p_0v^2 + p_1(w+x+y+z)v+ p_2(w^2+x^2+y^2+z^2) + p_3(wx+wy+wz+xy+xz+yz).\]
Other generators $f_2, \cdots, f_5$ are obtained by permuting the variables.  
Consider the Young subgroup 
\[G:=S_2 \times S_3,\]
which acts on $A$ with the division of the variables:
\[\{v,w,x,y,z\}=\{v,w\} \sqcup \{x,y,z\}.\] 

Then the ring $A^G$ of invariants 
has the Hilbert function 
\[(1 \ 2 \ 3 \ 3 \ 2 \ 1 ),\]
and most cases it is generated by degree one elements.   
But for $p_0=5, p_1=2, p_2=0, p_3=2$, the algebra $K[(A^G)_1] \subset A^G$ has the Hilbert function
\[(1 \ 2 \ 2 \ 2 \ 2 \ 1 ).\]
Some more such examples are: 
\[ (p_0, p_1, p_2, p_3)=(0, 0, 3, 8) \]
\[ (p_0, p_1, p_2, p_3)=(7, 7, 3, 8) \]
\[ (p_0, p_1, p_2, p_3)=(4, 3, 2, 6) \]
\[ (p_0, p_1, p_2, p_3)=(6, 0, 0, 4) \]
\[ (p_0, p_1, p_2, p_3)=(6, 3, 0, 2) \]
\[ (p_0, p_1, p_2, p_3)=(1, 1, 3, 8) \]
}
\end{example}

If $A=R/I$ is not a quadratic complete intersection, then in the general case  the ring of invariants of  
$A$ does not have the standard grading. In the next example we exhibit such a case.

\begin{example}
{\rm 
Let $R=\QQ[x_1, x_2, \cdots,  x_6]$, $I=(f_1, \cdots , f_6)$, where 
$f_i= x_i^3$ for $i=1, \cdots, 6$. 
Let $X_1=\{x_1, x_2, x_3\}$ and $X_2=\{x_4, x_5, x_6\}$ and 
let $G=S_3 \times S_3$ act on $A=R/I$ by permuting the variables within the blocks $X_1$ and $X_2$ 
in the way as described in Theorem~\ref{main_thm2}. 
Then the ring of invariants $A^G$ is, as a $K$-algebra, generated by the six elements of degrees $\{1,2,3,1,2,3\}$ as follows:
$r=x_1+x_2+x_3$, $s=x_1x_2+x_1x_3+x_2x_3$, $t=x_1x_2x_3$, $u=x_4+x_5+x_6$, $v=x_4x_5+x_4x_6+x_5x_6$ and $w=x_4x_5x_6$. 
They satisfy the relations 
\begin{enumerate}
\item $u^3-3uv+3w=0$, 
\item $r^3-3rs+3t=0$, 
\item $u^2v-2v^2-uw=0$, 
\item $r^2s-2s^2-rt=0$, 
\item $u^2w - 2vw=0$, 
\item $r^2t-2st=0$.  
\end{enumerate} 
Note that the ring  $A^G$ has in fact embedding dimension 4 and $t$ and $w$ can be eliminated 
but the grading is not standard. 
The Hilbert polynomial is 
\[1+2T+5T^2+8T^3+12T^4+14T^5+16T^6+14T^7+12T^8+8T^9+5T^{10}+2T^{11}+T^{12}\]
\[=((1+T^2)(1+T+T^2+T^3+T^4))^2.\]
}
\end{example}

\section{Appendix}
\begin{proposition} \label{generators_of_differential_module}
Let $R=K[x_1, \cdots, x_n]$ be the polynomial ring over a field 
$K$ of characteristic zero, on which the symmetric group $S_n$ acts  by 
permuting the variables. 
Let $f_1, \cdots, f_n \in R$  be a set of homogeneous elements which satisfies 
$f_i^{\sigma} = f_{\sigma(i)}$ for any  $\sigma \in S_n$ for $i=1, \cdots , n$.  
Define the polynomials $g_1, g_2, \cdots, g_n$ by 
\[\begin{pmatrix} g_1 \\ g_2 \\ \vdots \\ g_n \end{pmatrix}=
\begin{pmatrix} 
1&1& \cdots & 1 \\
x_1&x_2& \cdots & x_n \\
x_1^2&x_2^2& \cdots & x_n^2 \\
x_1^{n-1} & x_2^{n-1} & \cdots & x_n ^{n-1} 
\end{pmatrix}
\begin{pmatrix} f_1 \\ f_2 \\ \vdots \\ f_n \end{pmatrix}.
\]
Then the ideal $(f_1, \cdots, f_n)$ is a complete intersection if and only if 
$(g_1, \cdots, g_n)$ is a complete intersection.  
\end{proposition}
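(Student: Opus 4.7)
The inclusion $(g_1,\ldots,g_n)\subseteq(f_1,\ldots,f_n)$ is immediate from the definition, since each $g_j=\sum_i x_i^{j-1}f_i$ lies in the right-hand ideal. Because $R=K[x_1,\ldots,x_n]$ is Cohen--Macaulay and both ideals are generated by $n$ homogeneous elements of positive degree, each is a complete intersection if and only if its height equals $n$, equivalently if and only if its vanishing locus in $\AAA^n$ is the origin alone. Hence it suffices to prove the set-theoretic equality of zero loci $V(f_1,\ldots,f_n)=V(g_1,\ldots,g_n)$.

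For the non-trivial inclusion, fix a point $a=(a_1,\ldots,a_n)$ with $g_j(a)=0$ for every $j$. Partition $\{1,\ldots,n\}$ into the fibres $C_1,\ldots,C_k$ of the map $i\mapsto a_i$, and let $b_l$ be the common value of $a_i$ on $C_l$. The crucial input is the equivariance $f_i^{\sigma}=f_{\sigma(i)}$: for any $\sigma$ in the stabiliser of $a$ (which must permute every $C_l$ into itself),
$$f_{\sigma(i)}(a)=f_i^{\sigma}(a)=f_i(a_{\sigma(1)},\ldots,a_{\sigma(n)})=f_i(a),$$
so the value $f_i(a)$ depends only on the block containing $i$; call this common value $c_l$. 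Substituting into the hypothesis,
$$0=g_j(a)=\sum_{i=1}^{n}a_i^{j-1}f_i(a)=\sum_{l=1}^{k}|C_l|\,b_l^{j-1}\,c_l\qquad (j=1,\ldots,n).$$
The first $k$ of these relations form a non-degenerate Vandermonde system in the distinct scalars $b_1,\ldots,b_k$, so $|C_l|\,c_l=0$ for every $l$. Since $\mathrm{char}\,K=0$ and each $|C_l|$ is a positive integer, every $c_l$ vanishes, whence $f_i(a)=0$ for all $i$ and $a\in V(f_1,\ldots,f_n)$.

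The main obstacle is exactly the behaviour at points $a$ with repeated coordinates: there the numerical matrix specialising the Vandermonde transformation is singular, and one cannot invert $\mathbf g=V\mathbf f$ pointwise. The argument above circumvents this by using the $S_n$-equivariance to collapse the degenerate system to a smaller non-degenerate Vandermonde system indexed only by the distinct coordinate values, after which characteristic zero allows cancellation of the block sizes $|C_l|$. Both ingredients, the symmetry hypothesis on the $f_i$ and the characteristic zero assumption, are genuinely used.
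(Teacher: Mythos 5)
Your proof is correct, but it takes a genuinely different route from the paper's. You reduce the statement to a set-theoretic comparison of zero loci over the algebraic closure and then exploit the $S_n$-equivariance pointwise: at a common zero $a$ of the $g_j$, the stabilizer of $a$ forces $f_i(a)$ to be constant on each level set of $i \mapsto a_i$, and the collapsed Vandermonde system indexed by the distinct coordinate values is nonsingular, so characteristic zero lets you cancel the block sizes $|C_l|$ and conclude $f_i(a)=0$. The paper instead argues homologically: it takes the Koszul resolution of $R/(f_1,\ldots,f_n)$ compatibly with the $S_n$-action, uses exactness of the invariants functor in characteristic zero, and identifies $F^G=(\bigoplus_i R\,dx_i)^G$ as a free $R^G$-module generated by $\sum_i x_i^k\,dx_i$ for $k=0,\ldots,n-1$; the image of $\partial$ restricted to $F^G$ is then exactly $(g_1,\ldots,g_n)R^G=I\cap R^G$, which has finite colength in $R^G$. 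The trade-off is this: your argument is more elementary and self-contained (only the Nullstellensatz and the height criterion for regular sequences in a Cohen--Macaulay ring), but it proves only the stated equivalence of complete intersections, whereas the paper's argument simultaneously establishes the stronger fact that $g_1,\ldots,g_n$ generate the contracted ideal $I\cap R^G$ --- which is the form of the result actually invoked later in Corollary~\ref{uniform_generator}. One small point you should make explicit: the zero loci must be compared over $\overline{K}$ for the height criterion to apply, but your computation goes through verbatim there since $\overline{K}$ still has characteristic zero.
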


\begin{proof}
The ``if'' part is obvious. 
Put  $I=(f_1, \cdots, f_n)$ and assume that $I$ is  an ideal of finite colength, i.e., $I$ is a complete intersection. 
We want to prove that $g_1, \cdots, g_n$ generate an ideal of finite colength. 
For simplicity we put $G=S_n$. 
It is possible to construct a minimal free resolution of $(f_1, \cdots, f_n)$ which is 
compatible with the action of $G$.  For this the Koszul complex is enough:
\[0 \to \bigwedge ^n F      \to \cdots \to \bigwedge ^2 F   \to \bigwedge ^1 F \to \bigwedge ^0 F. \]
We may think $F$ is the free module generated by  $dx_1,dx_2, \cdots , dx_n$, and then  
 extend the action of $G$ to $\bigwedge ^k F$ (for any $k$) in the obvious manner. 
If $k=1$, it is easy to determine a minimal set of  generators for $F^G$ as an $R^G$-module. 
This can be done as follows.  First we note that $F ^G$ is a free $R^G$-module of rank $n$. 
For any $d$, the ideal 
$(x_1^d, x_2^d, \cdots, x_n^d) \cap R^G$ is generated by 
the power sum symmetric polynomials of degrees $d, d+1, \cdots, d+n-1$. (This is discussed in the proof of \cite{HW_21}~Lemma~7.6.)   
Hence it follows that the  invariant subspace $F^G$ of 
$F$ is a free $R^G$-module of rank $n$ generated by 
$\{\sum _{i=1}^nx_i^kdx_i\mid k=0,1,\cdots, n-1\}$.  In other words a matrix 
${\bf M}$ is determined to be the Van der Monde matrix if it satisfies,  for any $d$ given,  the following matrix identity:
\[\begin{pmatrix}  x_1^d+x_2^d+ \cdots + x_n^d \\ 
x_1^{d+1}+x_2^{d+1}+ \cdots + x_n^{d+1} \\ 
 \vdots  \\ 
x_1^{d+n-1}+x_2^{d+n-1}+ \cdots + x_n^{d+n-1} \\ 
\end{pmatrix}={\bf M}
\begin{pmatrix} x_1^d \\ x_2^d \\ \vdots \\ x_n^d \end{pmatrix}.
\]
The first part of the minimal free resolution of  $R/I$ takes the form 
\[\bigwedge ^2 F  \to  \bigwedge ^1 F  \stackrel{\partial}{\to} R \to R/I \to 0.\]
To extract the invariant subspace is an exact functor, so we have the exact sequence 
\[(\bigwedge ^2 F)^G  \to  F^G  \stackrel{\partial}{\to} R^G \to (R/I)^G \to 0\]
of free $R^G$-modules. The map $\partial : F \to R$ is defined by 
\[dx_i \mapsto f_i.\]
Thus the image of the restricted map $\partial :F ^G \to R^G$ is the ideal 
\[(g_1, g_2, \cdots, g_n)R^G.\] 
This shows that $I \cap R^G$ is an ideal of finite colength in $R^G$ or equivalently 
they generate an ideal of finite colength in $R$.  
\end{proof}
We will call the matrix in the statement of Proposition~\ref{generators_of_differential_module}
the Van der Monde matrix.
\begin{corollary} \label{uniform_generator}
Let $n=n_1 + \cdots + n_r$ be a partition of the integer $n$ and 
let  
\[\underbrace{x_1, \cdots, x_{n_1}}_{n_1}, \underbrace{x_{n_1+1}, \cdots, x_{n_1+n_2}}_{n_2}, x_{n_1+n_2+1},  
\cdots, \underbrace{x_{n-n_r + 1} \cdots, x_{n}}_{n_r}\]
be a decomposition of the variables into $r$ blocks. 
Let $G=S_{n_1}\times S_{n_2} \times \cdots \times S_{n_r} \subset S_n$ be a Young subgroup and 
let $G$ act on $R$ be the block-wise permutation of the variables. 
Suppose that $f_1, \cdots, f_n$ is a homogeneous complete intersection
which satisfies 
$f_i^{\sigma} = f_{\sigma (i)},\; i= 1, 2, \cdots, n,$ for all $\sigma \in G.$ 
Let $V_i$ be the Van der Monde matrix in the variables in the $i$-th block 
\[\{x_{n_1+ \cdots + n_{i-1} +j} \; \mid \; j=1 ,\cdots, n_{i}\}.\] 
Define the homogeneous elements $g_1, g_2, \cdots, g_n$ 
by 
\[\begin{pmatrix} g_1 \\ g_2 \\ \vdots \\ g_n \end{pmatrix}=
\begin{pmatrix} 
V_1& 0 & \cdots   & 0 \\  
0 & V_2 & \cdots  & 0 \\  
\vdots & \vdots &  \ddots  & \vdots  \\
0   & 0 & \cdots & V_r \\  
\end{pmatrix}
\begin{pmatrix} f_1 \\ f_2 \\ \vdots \\ f_n \end{pmatrix}.
\]
(The matrix is a block diagonal matrix.) 
Then $(g_1, \cdots, g_n)=(f_1, f_2, \cdots, f_n) \cap R^G$. 
\end{corollary}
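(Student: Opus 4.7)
The plan is to mirror the proof of Proposition~\ref{generators_of_differential_module} while exploiting the block structure of the Young subgroup.

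First, I would write down the $G$-equivariant Koszul complex for the regular sequence $f_1,\ldots,f_n$. With $F=\bigoplus_{j=1}^n R\,dx_j$ and boundary $\pa\colon F\to R$, $dx_j\mapsto f_j$, the equivariance $f_i^\sigma=f_{\sigma(i)}$ makes everything $G$-equivariant, and the complete intersection hypothesis yields exactness of $F\xrightarrow{\pa}R\to R/I\to 0$. Because $\text{char}\,K=0$ and $G$ is finite, taking $G$-invariants is exact, so
\[F^G\xrightarrow{\pa}R^G\to(R/I)^G\to 0\]
is exact; equivalently, $\pa(F^G)=I\cap R^G$ as ideals of $R^G$.

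The core step is to identify $F^G$ as an $R^G$-module. Write $F=\bigoplus_{i=1}^r F^{(i)}$ with $F^{(i)}=\bigoplus_{x_j\in X_i}R\,dx_j$; this decomposition is $G$-stable, so $F^G=\bigoplus_i (F^{(i)})^G$. Setting $R_i=K[X_i]$ and $F_i=\bigoplus_{j=1}^{n_i}R_i\,dx_{i,j}$, the tensor decomposition $R=R_1\otimes_K\cdots\otimes_K R_r$ as a $G$-module (each $S_{n_k}$ acting only on the $k$-th factor) gives
\[(F^{(i)})^G\;=\;R_1^{S_{n_1}}\otimes\cdots\otimes F_i^{S_{n_i}}\otimes\cdots\otimes R_r^{S_{n_r}}.\]
Applying Proposition~\ref{generators_of_differential_module} to the single-block polynomial ring $R_i$ with its $S_{n_i}$-action identifies $F_i^{S_{n_i}}$ as a free $R_i^{S_{n_i}}$-module on the Van der Monde generators $\sum_{j=1}^{n_i}x_{i,j}^k\,dx_{i,j}$, $k=0,1,\ldots,n_i-1$. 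Hence $(F^{(i)})^G$ is a free $R^G$-module on the same explicit set of generators.

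Finally, applying $\pa$ to each of these generators produces $\sum_{j=1}^{n_i}x_{i,j}^k f_{i,j}$, which is precisely the entry in row $k$ of the $i$-th block of the column $(g_1,\ldots,g_n)^{t}$. Ranging over all blocks one recovers $g_1,\ldots,g_n$, and combined with the exactness from the first step this yields $(g_1,\ldots,g_n)R^G=\pa(F^G)=I\cap R^G$, which is the claim. The main technical obstacle is the identification of the invariant submodule $(F^{(i)})^G$: one must verify that taking $G$-invariants commutes with the tensor factorization of $F^{(i)}$, which is routine because each $S_{n_k}$ acts on only one tensor factor and $|G|$ is invertible, and then cite Proposition~\ref{generators_of_differential_module} for the single-block case. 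Once these inputs are in place, the remaining assembly is purely formal.
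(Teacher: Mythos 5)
Your proposal is correct and follows essentially the same route as the paper: both use the $G$-equivariant Koszul complex, the exactness of taking $G$-invariants in characteristic zero, and the identification of $\Omega^G=F^G$ as a free $R^G$-module on the block-wise Van der Monde generators, then apply $\partial$ to recover $g_1,\ldots,g_n$. The only difference is that you spell out, via the tensor decomposition $R=R_1\otimes_K\cdots\otimes_K R_r$, the block-wise identification of $(F^{(i)})^G$ that the paper compresses into the phrase ``As in Proposition~\ref{generators_of_differential_module}.''
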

\begin{proof}
$(g_1, \cdots, g_n) \subset (f_1, f_2, \cdots, f_n) \cap R^G$
is obvious. 
Put   
\[\Omega = Rdx_1 \oplus Rdx_2 \oplus \cdots \oplus Rdx_n,\; I=(f_1, f_2, \cdots, f_n).\]
Construct a minimal free resolution of $R/I$ over $R$ as:
\[0 \to \bigwedge ^n\Omega  \to \cdots \to \bigwedge ^1 \Omega  \stackrel{\partial}{\to} R \to R/I \to 0,\]
so that  the boundary maps are compatible with the action of the group $G$. 
By taking the invariant subspaces for $G$ we may get the minimal free resolution 
of $(R/I)^G$. As in Proposition~\ref{generators_of_differential_module}, 
 the invariant subspace $\Omega ^G$ as an $R^G$-module can be generated by the elements which appear as 
the entries of the column vector: 
\[
\begin{pmatrix} 
V_1    & 0      & \cdots   & 0 \\  
0      & V_2    & \cdots   & 0 \\  
\vdots & \vdots & \ddots  & \vdots  \\
0      & 0      & \cdots  & V_r   
\end{pmatrix}
\begin{pmatrix} 
dx_1 \\ dx_2 \\ \vdots \\ dx_n 
\end{pmatrix}.
\]
The map $\Omega \stackrel{\partial}{\to} R$  is defined as 
$dx_i \mapsto f_i$. Hence 
we obtain the module $R^G/(R^G \cap (f_1, \cdots, f_n))$ as 
represented by $R^G/(g_1, \cdots, g_n)R^G$.   
\end{proof}

\begin{remark}
We have been unable to determine a set of generators for $(\bigwedge ^k F)^G$ for $k >1$ except 
for $k=n-1, n$.   
If $k=n$, then $F^n$ is the free $R^G$-module of rank one with 
\[\left(\prod _{1 \leq k < l \leq n}(x_l-x_k)\right) dx_1\wedge dx_2 \wedge \cdots \wedge dx_n\]
as a generator. 
If $k=n-1$, a set of generators can be specified similarly. 
\end{remark}

\end{document}